\newcommand{\N}{\mathbb{N}}
\newcommand{\R}{\mathbb{R}}
\newtheorem{theorem}{Theorem}
\theoremstyle{plain}
\newtheorem{definition}{Definition}
\numberwithin{equation}{section}
\begin{document}
\title[Coupled fixed point theorems in partially ordered metric spaces]{Coupled fixed point theorems for contraction involving  rational expressions in partially ordered metric spaces}
\author{Bessem Samet}
\address{DEPARTMENT OF MATHEMATICS, TUNIS COLLEGE OF SCIENCES AND TECHNIQUES, 5 AVENUE TAHA HUSSEIN, BP, 59, BAB MANARA, TUNIS. }
\email{bessem.samet@gmail.com}
\author{Habib Yazidi}
\address{DEPARTMENT OF MATHEMATICS, TUNIS COLLEGE OF SCIENCES AND TECHNIQUES, 5 AVENUE TAHA HUSSEIN, BP, 59, BAB MANARA, TUNIS.}
\email{Habib.Yazidi@esstt.rnu.tn}
\subjclass[2000]{54H25, 47H10, 34B15}
\keywords{Coupled fixed point, partially ordered set, rational expression.}
\begin{abstract}
We establish coupled fixed point theorems for contraction involving  rational expressions in partially ordered metric spaces. 
\end{abstract}
\maketitle

\section{Introduction and preliminaries}
A number of generalizations of the well-known Banach contraction
theorem \cite{BAN} were obtained in various directions.   Most of these deal with the generalizations of the contractive condition (see \cite{beg, DG, de, hard, J, J2, J3, J4, J5, Matk, MK, p, SR, Rh, sessa} and others). 
Recently, Bhaskar and Lakshmikantham \cite{BL}, Nieto and Lopez \cite{N, N2}, Ran and Reurings \cite{R} and Agarwal, El-Gebeily and O'Regan \cite{A} presented some new results for contractions in partially ordered metric spaces.

In \cite{BL}, Bhaskar and Lakshmikantham introduced the notion of a coupled fixed point 
and proved some coupled fixed point theorems for mappings satisfying a mixed monotone
property. They discussed the problems of a uniqueness of a coupled fixed point and
applied their theorems to problems of the existence and uniqueness of solution for a periodic boundary value problem.  Lakshmikantham and \'Ciri\'c \cite{L} introduced
the concept of a mixed g-monotone mapping and proved coupled coincidence
and coupled common fixed point theorems that extend theorems due to Bhaskar
and Lakshmikantham \cite{BL}.

In this paper, we derive new coupled fixed point theorems in partially ordered metric space for mappings satisfying a mixed monotone property and a contractive condition involving  rational expression. Our obtained results generalize theorems due to Bhaskar
and Lakshmikantham \cite{BL}.

Before presenting the main results of the paper, we start by recalling some definitions introduced in \cite{BL}.

\begin{definition}\label{def1}
Let $(X,\leq)$ be a partially ordered set and $F: X\times X\rightarrow X$. We say that
$F$ has the mixed monotone property if for any $x, y\in X$, 
\begin{eqnarray*}
x_1, x_2\in X, \quad x_1\leq x_2 \Rightarrow F(x_1,y)\leq F(x_2,y),\\ 
y_1, y_2\in X, \quad y_1\leq y_2 \Rightarrow F(x,y_1)\geq F(x,y_2).
\end{eqnarray*}
\end{definition}
\noindent This definition coincides with the notion of a mixed monotone function on $\R^2$ and
 $\leq $ represents the usual total order in $\R$.
\begin{definition}\label{def2}
We call an element $(x,y)\in X \times X$ a coupled fixed point of the mapping 
$F: X\times X \rightarrow X$ if
$$
F(x,y)=x \quad \mbox{ and } \quad F(y,x)=y.
$$
\end{definition}
Let $(X,\leq)$ be a partially ordered set. Further, we endow the product space $X\times X$ with the following partial order:
$$
\mbox{ for } (x,y), (u,v) \in X \times X,\quad  (u,v)\leq (x,y)\Leftrightarrow x\geq u, \,\, y \leq v.
$$
\section{Coupled fixed point theorems}
\subsection{Existence of a coupled fixed point}

We start by the following result.
\begin{theorem}\label{T1}
Let $(X,\leq)$ be a partially ordered set and $d$ be a metric on $X$ such that 
$(X,d)$ is a complete metric space. Let $F: X \times X \rightarrow X$ be a continuous mapping having the mixed monotone property on $X$. For all $(x,y), (u,v) \in X\times X$, we denote by $M((x,y),(u,v))$ the quantity: 
$$
\min\left\{d(x,F(x,y))\,\frac{2+d(u,F(u,v))+d(v,F(v,u))}{2+d(x,u)+d(y,v)}, d(u,F(u,v))\,\frac{2+d(x,F(x,y))+d(y,F(y,x))}{2+d(x,u)+d(y,v)}\right\}.
$$
Assume that there exists
$\alpha, \beta> 0$ with $\alpha+\beta<1$ such that
\begin{equation}\label{contraction}
d(F(x,y),F(u,v))\leq \alpha M((x,y),(u,v))+\frac{\beta}{2}[d(x,u)+d(y,v)]
\end{equation}
for all $(x,y), (u,v)\in X\times X$ with $x\geq u$ and $y\leq v$.
We assume that there exists $x_0, y_0\in X$ such that
\begin{equation}\label{cond}
x_0\leq F(x_0,y_0) \quad \mbox{ and } \quad y_0\geq F(y_0,x_0).
\end{equation}
Then, $F$ has a coupled fixed point $(x,y)\in X\times X$.
\end{theorem}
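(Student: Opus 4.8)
The plan is to run the Bhaskar--Lakshmikantham iteration and extract a genuine geometric contraction from the rational estimate. First I would fix $x_0, y_0$ as in \eqref{cond} and define two sequences by $x_{n+1} = F(x_n, y_n)$ and $y_{n+1} = F(y_n, x_n)$. Using the hypotheses $x_0 \leq F(x_0, y_0)$ and $y_0 \geq F(y_0, x_0)$ together with the mixed monotone property, a routine induction shows that $(x_n)$ is nondecreasing and $(y_n)$ is nonincreasing: $x_0 \leq x_1 \leq \cdots$ and $y_0 \geq y_1 \geq \cdots$. In particular, for every $n$ one has $x_n \geq x_{n-1}$ and $y_n \leq y_{n-1}$, so the pairs $(x_n, y_n)$ and $(x_{n-1}, y_{n-1})$ are comparable in exactly the direction required for \eqref{contraction} to apply.

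The crucial step is to apply \eqref{contraction} to two consecutive iterates and to exploit the structure of $M$. Taking $(x,y) = (x_n, y_n)$ and $(u,v) = (x_{n-1}, y_{n-1})$, I observe that in the first argument of the minimum defining $M$ the numerator $2 + d(x_{n-1}, x_n) + d(y_{n-1}, y_n)$ and the denominator $2 + d(x_n, x_{n-1}) + d(y_n, y_{n-1})$ coincide, so that factor collapses to $1$ and the first argument is exactly $d(x_n, x_{n+1})$. Since $M$ is a minimum, this gives $M((x_n, y_n), (x_{n-1}, y_{n-1})) \leq d(x_n, x_{n+1})$, and \eqref{contraction} yields
\begin{equation*}
d(x_{n+1}, x_n) \leq \alpha\, d(x_{n+1}, x_n) + \frac{\beta}{2}\bigl[d(x_n, x_{n-1}) + d(y_n, y_{n-1})\bigr].
\end{equation*}
Running the symmetric argument for the other sequence, with $(x,y) = (y_{n-1}, x_{n-1})$ and $(u,v) = (y_n, x_n)$ (again comparable thanks to the monotonicity just established), the \emph{second} argument of the minimum collapses to $d(y_n, y_{n+1})$, producing the analogous bound for $d(y_{n+1}, y_n)$.

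Absorbing the $\alpha$-terms to the left and adding the two inequalities, I set $\delta_n = d(x_n, x_{n+1}) + d(y_n, y_{n+1})$ and obtain $\delta_n \leq k\,\delta_{n-1}$ with $k = \beta/(1-\alpha)$. The assumption $\alpha + \beta < 1$ forces $0 < k < 1$, whence $\delta_n \leq k^n \delta_0$. A standard telescoping and geometric-series estimate then shows that both $(x_n)$ and $(y_n)$ are Cauchy; by completeness of $(X,d)$ they converge to some $x, y \in X$. Finally, continuity of $F$ gives $x = \lim x_{n+1} = \lim F(x_n, y_n) = F(x,y)$ and $y = \lim y_{n+1} = \lim F(y_n, x_n) = F(y,x)$, so $(x,y)$ is a coupled fixed point.

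I expect the main obstacle to be the second step: recognizing the cancellation in the rational factor and choosing the arguments of $M$ (and in particular the substitution $(y_{n-1},x_{n-1})$ versus $(y_n,x_n)$ for the second sequence) so that the minimum is controlled by the \emph{current} displacement $d(x_n,x_{n+1})$, respectively $d(y_n,y_{n+1})$, rather than the previous one. Everything else --- the monotone induction, the Cauchy estimate, and the passage to the limit --- is routine once that reduction to a contraction with ratio $\beta/(1-\alpha) < 1$ is in place.
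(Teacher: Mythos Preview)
Your proposal is correct and follows essentially the same route as the paper: construct the monotone iterates, observe that with consecutive iterates plugged into $M$ the rational factor collapses to $1$ so that $M$ is bounded by the current displacement, absorb the $\alpha$-term to obtain the contraction ratio $\beta/(1-\alpha)<1$, and conclude via completeness and continuity. The only cosmetic difference is that the paper bounds $d(x_{n+1},x_n)$ and $d(y_{n+1},y_n)$ individually by $\bigl(\tfrac{\beta}{1-\alpha}\bigr)^n\tfrac{\delta_0}{2}$ via induction, whereas you sum first and bound $\delta_n$; both yield the Cauchy estimate in the same way.
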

\begin{proof}
Since $x_0\leq F(x_0,y_0)=x_1$ (say) and $y_0\geq F(y_0,x_0)=y_1$ (say), letting
$x_2=F(x_1,y_1)$ and $y_2=F(y_1,x_1)$. We denote:
$$
F^2(x_0,y_0)=F(F(x_0,y_0),F(y_0,x_0))=F(x_1,y_1)=x_2
$$
and
$$
F^2(y_0,x_0)=F(F(y_0,x_0),F(x_0,y_0))=F(y_1,x_1)=y_2.
$$
Due to the mixed property of $F$, we have:
$$
x_2=F(x_1,y_1) \geq F(x_0,y_1)\geq F(x_0,y_0)=x_1
$$
and
$$
y_2=F(y_1,x_1)\leq F(y_0,x_1)\leq F(y_0,x_0).
$$
Further, for $n= 1, 2, \cdots $, we let,
$$
x_{n+1}=F^{n+1}(x_0,y_0)=F(F^n(x_0,y_0),F^n(y_0,x_0))
$$
and
$$
y_{n+1}=F^{n+1}(y_0,x_0)=F(F^n(y_0,x_0),F^n(x_0,y_0)).
$$
We check easily that
$$
x_0\leq x_1\leq x_2\leq \cdots \leq x_{n+1}\leq \cdots
$$
and
$$
y_0\geq y_1 \geq y_2\geq \cdots \geq y_{n+1}\geq \cdots
$$
Now, we claim that, for $n\in\N^*$,
\begin{equation}\label{b1}
d(x_{n+1},x_n)\leq \left(\frac{\beta}{1-\alpha}\right)^n\frac{[d(x_1,x_0)+d(y_1,y_0)]}{2}
\end{equation}
and
\begin{equation}\label{b2}
d(y_{n+1},y_n)\leq \left(\frac{\beta}{1-\alpha}\right)^n\frac{[d(x_1,x_0)+d(y_1,y_0)]}{2}\cdot
\end{equation}
Indeed, for $n=1$, using $x_1\geq x_0$, $y_1\leq y_0$ and (\ref{contraction}), we get:
\begin{eqnarray*}
d(x_2,x_1)&=&d(F(x_1,y_1),F(x_0,y_0))\\
&\leq & \alpha M((x_1,y_1),(x_0,y_0))+\beta \frac{[d(x_0,x_1)+d(y_0,y_1)]}{2}\\
&\leq & \alpha d(x_1,F(x_1,y_1))\,\frac{2+d(x_0,F(x_0,y_0))+d(y_0,F(y_0,x_0))}{2+d(x_0,x_1)+d(y_0,y_1)}\\
&&+\beta\frac{[d(x_0,x_1)+d(y_0,y_1)]}{2}\\
&=&\alpha d(x_1,x_2)+\beta\frac{[d(x_0,x_1)+d(y_0,y_1)]}{2}\cdot
\end{eqnarray*}
This implies that
$$
(1-\alpha)d(x_2,x_1)\leq \beta\frac{[d(x_0,x_1)+d(y_0,y_1)]}{2},
$$
i.e.,
$$
d(x_2,x_1)\leq \left(\frac{\beta}{1-\alpha}\right)\frac{[d(x_0,x_1)+d(y_0,y_1)]}{2}.
$$
Similarly, 
\begin{eqnarray*}
d(y_2,y_1)&=&d(F(y_1,x_1),F(y_0,x_0))\\
&=&d(F(y_0,x_0),F(y_1,x_1))\\
&\leq & \alpha M((y_0,x_0),(y_1,x_1))+\beta \frac{[d(x_0,x_1)+d(y_0,y_1)]}{2}\\
&\leq & \alpha d(y_1,F(y_1,x_1))\,\frac{2+d(y_0,F(y_0,x_0))+d(x_0,F(x_0,y_0))}{2+d(y_0,y_1)+d(x_0,x_1)}\\
&&+\beta \frac{[d(x_0,x_1)+d(y_0,y_1)]}{2}\\
&=& \alpha d(y_1,y_2)+\beta \frac{[d(x_0,x_1)+d(y_0,y_1)]}{2},
\end{eqnarray*}
which implies that
$$
d(y_2,y_1)\leq \left(\frac{\beta}{1-\alpha}\right)\frac{[d(x_0,x_1)+d(y_0,y_1)]}{2}.
$$
Now, assume that (\ref{b1}) and (\ref{b2}) hold. Using $x_{n+1}\geq x_n$ and $y_{n+1}\leq y_n$, we get
\begin{eqnarray*}
d(x_{n+2},x_{n+1})&=&d(F(x_{n+1},y_{n+1}),F(x_n,y_n))\\
&\leq & \alpha M((x_{n+1},y_{n+1}),(x_n,y_n))+\beta \frac{[d(x_{n+1},x_n)+d(y_{n+1},y_n)]}{2}\\
&\leq & \alpha d(x_{n+1},x_{n+2})\,\frac{2+d(x_n,x_{n+1})+d(y_n,y_{n+1})}{2+d(x_{n+1},x_n)+d(y_{n+1},y_n)}\\
&&+\beta \frac{[d(x_{n+1},x_n)+d(y_{n+1},y_n)]}{2}\\
&=&\alpha d(x_{n+1},x_{n+2})+\beta \frac{[d(x_{n+1},x_n)+d(y_{n+1},y_n)]}{2}.
\end{eqnarray*}
This implies that
\begin{eqnarray*}
d(x_{n+2},x_{n+1})&\leq & \left(\frac{\beta}{1-\alpha}\right) \frac{[d(x_{n+1},x_n)+d(y_{n+1},y_n)]}{2}\\
&\leq & \left(\frac{\beta}{1-\alpha}\right)^{n+1}\frac{[d(x_0,x_1)+d(y_0,y_1)]}{2}\cdot
\end{eqnarray*}
Similarly, one can show that
$$
d(y_{n+2},y_{n+1})\leq \left(\frac{\beta}{1-\alpha}\right)^{n+1}\frac{[d(x_0,x_1)+d(y_0,y_1)]}{2}\cdot
$$
Since $0<\displaystyle \frac{\beta}{1-\alpha}<1$, (\ref{b1}) and (\ref{b2})
imply that $\{x_n\}$ and $\{y_n\}$ are Cauchy sequences in $X$.

Since $(X,d)$ is a complete metric space, there exists $(x,y)\in X\times X$ such that
\begin{equation}\label{fc}
\lim_{n\rightarrow +\infty} x_n=x \quad \mbox{ and } \quad \lim_{n\rightarrow +\infty} y_n=y.
\end{equation}
Finally, we claim that $(x,y)$ is a coupled fixed point of $F$. 
In fact, we have:
$$
d(F(x,y),x)\leq d(F(x,y),x_{n+1})+d(x_{n+1},x)=d(F(x,y),F(x_n,y_n))+d(x_{n+1},x).
$$
From (\ref{fc}) and the continuity of $F$, we get immediately that
$$
\lim_{n\rightarrow +\infty} d(F(x,y),F(x_n,y_n))+d(x_{n+1},x)=0.
$$
Then, $F(x,y)=x$. Similarly,
$$
d(F(y,x),y)\leq d(F(y,x),y_{n+1})+d(y_{n+1},y)=d(F(y,x),F(y_n,x_n))+d(y_{n+1},y)
$$
and
$$
\lim_{n\rightarrow +\infty} d(F(y,x),F(y_n,x_n))+d(y_{n+1},y)=0,
$$
which implies that $F(y,x)=y$. This makes end to the proof.
\end{proof}

As in \cite{BL}, Theorem \ref{T1} is also valid if we replace the continuity hypothesis
of $F$ by an additional  property satisfied by the metric space $X$.
This is the aim of the following theorem.
\begin{theorem}\label{T2}
Let $(X,\leq)$ be a partially ordered set and $d$ be a metric on $X$ such that 
$(X,d)$ is a complete metric space. Assume that $X$ has the following property:
\begin{enumerate}
\item[(i)] if a nondecreasing sequence $\{x_n\}$ in $X$  converges to $x\in X$, then
$x_n\leq x$ for all $n$,
\item [(ii)] if a nonincreasing sequence $\{y_n\}$ in $X$ converges to $y\in X$, then
$y_n\geq y$ for all $n$.
\end{enumerate}
Let $F: X \times X \rightarrow X$ be a mapping having the mixed monotone property on $X$. Assume that there exists $\alpha, \beta> 0$ with $\alpha+\beta<1$ such that
$$
d(F(x,y),F(u,v))\leq \alpha M((x,y),(u,v))+\frac{\beta}{2}[d(x,u)+d(y,v)]
$$
for all $(x,y), (u,v)\in X\times X$ with $x\geq u$ and $y\leq v$.
We assume that there exists $x_0, y_0\in X$ such that
$$
x_0\leq F(x_0,y_0) \quad \mbox{ and } \quad y_0\geq F(y_0,x_0).
$$
Then, $F$ has a coupled fixed point $(x,y)\in X\times X$.
\end{theorem}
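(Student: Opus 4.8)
The plan is to reuse the construction from Theorem \ref{T1} verbatim up to the point where we obtain the two Cauchy sequences, and then replace only the final convergence argument, which is the sole place where continuity was invoked. Concretely, I would first define the sequences $x_{n+1}=F(x_n,y_n)$ and $y_{n+1}=F(y_n,x_n)$ exactly as before, observe via the mixed monotone property that $\{x_n\}$ is nondecreasing and $\{y_n\}$ is nonincreasing, and establish the geometric estimates (\ref{b1}) and (\ref{b2}). These steps use only the contractive condition and the mixed monotone property, neither of which has been weakened, so the argument carries over word for word and yields a limit point $(x,y)\in X\times X$ with $x_n\to x$ and $y_n\to y$, since $(X,d)$ is complete.

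The substantive change occurs in showing that $(x,y)$ is a coupled fixed point, now that $F$ need not be continuous. Here I would invoke the new hypotheses (i) and (ii): since $\{x_n\}$ is nondecreasing and converges to $x$, we have $x_n\le x$ for all $n$; since $\{y_n\}$ is nonincreasing and converges to $y$, we have $y_n\ge y$ for all $n$. The key observation is that the pair $(x,y_n)$ (respectively $(x_n,y)$) is then comparable in the required sense, namely $x\ge x_n$ and $y\le y_n$, so that the contractive inequality (\ref{contraction}) applies with $(x,y)$ and $(u,v)=(x_n,y_n)$. I would then estimate
\begin{equation*}
d(x,F(x,y))\le d(x,x_{n+1})+d(F(x_n,y_n),F(x,y))
\end{equation*}
and bound the second term using the contraction, obtaining a factor $\alpha M((x,y),(x_n,y_n))+\tfrac{\beta}{2}[d(x,x_n)+d(y,y_n)]$.

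The main obstacle will be controlling the quantity $M((x,y),(x_n,y_n))$ as $n\to\infty$, because its definition contains $d(x,F(x,y))$ and $d(y,F(y,x))$ in a numerator — precisely the expressions we are trying to prove vanish. I expect the resolution to be that both minimands in $M$ tend to $0$: the denominator $2+d(x,x_n)+d(y,y_n)\to 2$ stays bounded away from zero, while each numerator factor is multiplied either by $d(x_n,F(x_n,y_n))=d(x_n,x_{n+1})\to 0$ or by $d(y_n,F(y_n,x_n))=d(y_n,y_{n+1})\to 0$. Since $M$ is a minimum, it suffices that one of the two terms tends to $0$, which it does; hence $d(x,F(x,y))\le 0$ in the limit and $F(x,y)=x$. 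The symmetric computation, using $d(y,F(y,x))\le d(y,y_{n+1})+d(F(y_n,x_n),F(y,x))$ together with the comparability $y\le y_n$, $x\ge x_n$, gives $F(y,x)=y$, completing the proof.
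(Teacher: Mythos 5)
Your proposal is correct and follows essentially the same route as the paper: reuse the Cauchy-sequence construction and limit $(x,y)$ from Theorem \ref{T1}, invoke (i)--(ii) to get $x\ge x_n$, $y\le y_n$ so the contraction applies to the pairs $(x,y)$ and $(x_n,y_n)$, and pass to the limit in $d(F(x,y),x)\le d(x,x_{n+1})+d(F(x,y),F(x_n,y_n))$. The only difference is cosmetic, apart from one inaccurate aside: the first minimand of $M((x,y),(x_n,y_n))$ does \emph{not} tend to $0$ (it tends to $d(x,F(x,y))$, since the fraction tends to $1$), so your claim that both minimands vanish is wrong, but, as you yourself note, only one needs to vanish and the second one does ($d(x_n,x_{n+1})\to 0$ times a bounded fraction), giving $d(F(x,y),x)\le 0$ directly, whereas the paper uses the first minimand and concludes via $(1-\alpha)d(F(x,y),x)\le 0$ --- both are valid.
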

\begin{proof}
Following the proof of Theorem \ref{T1}, we only have to show that $(x,y)$ is a coupled fixed point of $F$. We have:
\begin{equation}\label{bhama}
d(F(x,y),x) \leq  d(F(x,y),x_{n+1})+d(x_{n+1},x)=d(F(x,y),F(x_n,y_n))+d(x_{n+1},x).
\end{equation}
Since the nondecreasing sequence $\{x_n\}$ converges to $x$ and the nonincreasing sequence $\{y_n\}$ converges to $y$, by (i)-(ii), we have:
$$
x\geq x_n \quad \mbox{ and } \quad y\leq y_n,\quad \forall\,n.
$$
Now, from the contraction condition, we have:
\begin{eqnarray*}
d(F(x,y),F(x_n,y_n))&\leq &\alpha M((x,y),(x_n,y_n))+\frac{\beta}{2}[d(x,x_n)+d(y,y_n)]\\
&\leq & \alpha d(x,F(x,y))\,\frac{2+d(x_n,x_{n+1})+d(y_n,y_{n+1})}{2+d(x,x_n)+d(y,y_n)}\\
&&+\frac{\beta}{2}[d(x,x_n)+d(y,y_n)].
\end{eqnarray*}
Then, from (\ref{bhama}), we get:
\begin{eqnarray*}
d(F(x,y),x) &\leq & \alpha d(x,F(x,y))\,\frac{2+d(x_n,x_{n+1})+d(y_n,y_{n+1})}{2+d(x,x_n)+d(y,y_n)}\\
&&+\frac{\beta}{2}[d(x,x_n)+d(y,y_n)]+d(x_{n+1},x)\\
&\rightarrow & \alpha d(x,F(x,y)) \mbox{ as } n\rightarrow +\infty.
\end{eqnarray*}
This implies that
$$
(1-\alpha)d(F(x,y),x)\leq 0.
$$
Since $0<\alpha<1$, we obtain $d(F(x,y),x)=0$, i.e., $F(x,y)=x$.

Similarly,
\begin{equation}\label{bham}
d(F(y,x),y)\leq d(F(y,x),y_{n+1})+d(y_{n+1},y)=d(F(y_n,x_n),F(y,x))+d(y_{n+1},y).
\end{equation}
From the contraction condition, we get:
$$
d(F(y_n,x_n),F(y,x))\leq \alpha d(y,F(y,x))\,\frac{2+d(y_n,y_{n+1})+d(x_n,x_{n+1})}{2+d(y_n,y)+d(x_n,x)}+\frac{\beta}{2}[d(y_n,y)+d(x_n,x)].
$$

Then, from (\ref{bham}), we get:
\begin{eqnarray*}
d(F(y,x),y)&\leq & \alpha d(y,F(y,x))\,\frac{2+d(y_n,y_{n+1})+d(x_n,x_{n+1})}{2+d(y_n,y)+d(x_n,x)}+\frac{\beta}{2}[d(y_n,y)+d(x_n,x)]\\
&&+d(y_{n+1},y)\\
&\rightarrow & \alpha d(y,F(y,x)) \mbox{ as } n\rightarrow +\infty.
\end{eqnarray*}
This implies that
$$
(1-\alpha)d(F(y,x),y)\leq 0.
$$
Hence, $d(F(y,x),y)=0$, i.e., $F(y,x)=y$.
This makes end to the proof. 
\end{proof}

\subsection{Uniqueness of the coupled fixed point}
\begin{theorem}\label{T3}
Assume that  
\begin{equation}\label{H}
\forall (x,y), (x^*,y^*) \in X\times X, \, \exists\, (z_1,z_2)\in X\times X \mbox{  that is comparable to } (x,y) \mbox{ and } (x^*, y^*).
\end{equation}
Adding (\ref{H}) to the hypotheses of Theorem \ref{T1}, we obtain the uniqueness of the coupled fixed point of $F$.
\end{theorem}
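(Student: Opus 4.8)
The plan is to show that any two coupled fixed points must coincide, exploiting hypothesis (\ref{H}) to connect them through a common comparable element and then driving the associated iterates to both fixed points by a contraction argument. Let $(x,y)$ and $(x^*,y^*)$ be two coupled fixed points furnished by Theorem \ref{T1}. By (\ref{H}) there exists $(z_1,z_2)\in X\times X$ comparable (in the product order) to both $(x,y)$ and $(x^*,y^*)$. Starting from $(z_1^0,z_2^0):=(z_1,z_2)$, I would build the iterates $z_1^{n+1}=F(z_1^n,z_2^n)$ and $z_2^{n+1}=F(z_2^n,z_1^n)$, exactly as in the proof of Theorem \ref{T1}.

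First I would check that comparability is preserved along the iteration. Assuming, say, $(x,y)\geq (z_1^n,z_2^n)$, i.e. $x\geq z_1^n$ and $y\leq z_2^n$, the mixed monotone property yields $F(x,y)\geq F(z_1^n,z_2^n)$ and $F(y,x)\leq F(z_2^n,z_1^n)$; since $x=F(x,y)$ and $y=F(y,x)$, this gives $x\geq z_1^{n+1}$ and $y\leq z_2^{n+1}$, so the relation $(x,y)\geq (z_1^n,z_2^n)$ propagates to all $n$ by induction (the reverse inequality is handled symmetrically).

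The key observation is that, because $x=F(x,y)$, we have $d(x,F(x,y))=0$, so the first entry in the minimum defining $M((x,y),(z_1^n,z_2^n))$ vanishes and hence $M((x,y),(z_1^n,z_2^n))=0$; the analogous remark applies to the $y$-component since $y=F(y,x)$. Applying the contraction (\ref{contraction}) to $d(x,z_1^{n+1})=d(F(x,y),F(z_1^n,z_2^n))$ and to $d(z_2^{n+1},y)=d(F(z_2^n,z_1^n),F(y,x))$ (with the arguments ordered so that the comparability just established licenses (\ref{contraction})), the $\alpha$-terms drop out, and after adding I obtain
\begin{equation*}
d(x,z_1^{n+1})+d(y,z_2^{n+1})\leq \beta\,[d(x,z_1^n)+d(y,z_2^n)].
\end{equation*}
Since $\alpha+\beta<1$ forces $\beta<1$, iterating gives $d(x,z_1^n)+d(y,z_2^n)\leq \beta^n[d(x,z_1)+d(y,z_2)]\to 0$, so $z_1^n\to x$ and $z_2^n\to y$.

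Running the identical argument with $(x^*,y^*)$ in place of $(x,y)$ shows $z_1^n\to x^*$ and $z_2^n\to y^*$. By uniqueness of limits in the complete metric space $(X,d)$ I conclude $x=x^*$ and $y=y^*$, which is the asserted uniqueness. The main obstacle I anticipate is the bookkeeping around the mixed monotone property: one must arrange the arguments of $F$ in the correct order, and track which component is nondecreasing and which is nonincreasing, so that both the preservation of comparability and the applicability of (\ref{contraction}) hold simultaneously. Once the vanishing of $M$ via $d(x,F(x,y))=0$ is noticed, the remaining estimate is routine.
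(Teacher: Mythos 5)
Your proof is correct and follows essentially the same route as the paper: iterate $F$ on the comparable element $(z_1,z_2)$ supplied by (\ref{H}), note that $M$ vanishes whenever one argument is a coupled fixed point (because $d(x,F(x,y))=0$ kills the minimum, so the $\alpha$-term drops), and conclude from the resulting $\beta^n$ decay. The only cosmetic difference is that the paper first treats separately the case where the two fixed points are already comparable and only invokes (\ref{H}) otherwise, whereas you handle both situations uniformly through $(z_1,z_2)$; this changes nothing of substance.
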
 
\begin{proof}
Suppose that $(x^*,y^*)$ is another coupled fixed point of $F$, i.e., 
$$
F(x^*,y^*)=x^* \quad \mbox{ and } \quad F(y^*,x^*)=y^*.
$$
Let us prove that
\begin{equation}\label{goal}
d(x,x^*)+d(y,y^*)=0,
\end{equation}
where 
$$
\lim_{n\rightarrow +\infty} F^n(x_0,y_0)=x \quad \mbox{ and } \quad 
\lim_{n\rightarrow +\infty}F^n(y_0,x_0)=y.
$$
As in \cite{BL}, we distinguish  two cases.

First case: $(x,y)$ is comparable to $(x^*,y^*)$ with respect the ordering in $X\times X$.\\
In this case,  for all $n\in \N$, 
$$
(F^n(x,y),F^n(y,x))=(x,y) \mbox{ is comparable to }(F^n(x^*,y^*),F^n(y^*,x^*))=(x^*,y^*).
$$
Also,
\begin{eqnarray*}
d(x,x^*)+d(y,y^*)&=&d(F^n(x,y),F^n(x^*,y^*))+d(F^n(y,x),F^n(y^*,x^*))\\
&\leq & \beta^n[d(x,x^*)+d(y,y^*)].
\end{eqnarray*}
Since $0<\beta<1$, (\ref{goal}) holds.

Second case: $(x,y)$ is not comparable to $(x^*,y^*)$.\\ In this case, there exists
$(z_1,z_2)\in X\times X$ that is comparable to $(x,y)$ and $(x^*,y^*)$.
Then, for all $n\in \N$, $(F^n(z_1,z_2),F^n(z_2,z_1))$ is comparable to $(F^n(x,y), F^n(y,x))=(x,y)$ and $(F^n(x^*,y^*),F^n(y^*,x^*))=(x^*,y^*)$. We have:
\begin{eqnarray*}
d(x,x^*)+d(y,y^*)&=&d(F^n(x,y),F^n(x^*,y^*))+d(F^n(y,x),F^n(y^*,x^*))\\
&\leq & d(F^n(x,y), F^n(z_1,z_2))+d(F^n(z_1,z_2),F^n(x^*,y^*))\\
&&+d(F^n(y,x),F^n(z_2,z_1))+d(F^n(z_2,z_1),F^n(y^*,x^*))\\
&\leq & \beta^n[d(x,z_1)+d(y,z_2)+d(x^*,z_1)+d(y^*,z_2)].
\end{eqnarray*}
Since $0<\beta<1$, we have:
$$
\lim_{n\rightarrow +\infty}\beta^n[d(x,z_1)+d(y,z_2)+d(x^*,z_1)+d(y^*,z_2)]=0,
$$
which implies that (\ref{goal}) holds.

We deduce that in all cases (\ref{goal}) holds. This implies that $(x,y)=(x^*,y^*)$ and the uniqueness of the coupled fixed point of $F$ is proved.
\end{proof}

\subsection{Equality between the components of the coupled fixed point}

If $x_0, y_0$ in $X$ are comparable, we have the following result.
\begin{theorem}\label{france}
In addition to the hypotheses of Theorem \ref{T1} (resp. Theorem \ref{T2}), suppose that $x_0, y_0$ in $X$ are comparable. Then $x=y$.
\end{theorem}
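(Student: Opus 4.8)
The plan is to control the ``diagonal distance'' $\delta_n := d(x_n,y_n)$ along the two iteration sequences $\{x_n\}$ and $\{y_n\}$ constructed in the proof of Theorem~\ref{T1}, and to prove that $\delta_n \to 0$. Since $x_n \to x$ and $y_n \to y$, continuity of the metric will then give $d(x,y)=\lim_{n\to+\infty}\delta_n=0$, that is $x=y$. As $x_0$ and $y_0$ are comparable, I may assume without loss of generality that $x_0 \le y_0$, the case $x_0 \ge y_0$ being entirely symmetric.

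First I would check, by induction on $n$, that comparability propagates, i.e. $x_n \le y_n$ for every $n$. For the inductive step, from $x_n \le y_n$ the mixed monotone property gives $F(x_n,y_n) \le F(y_n,y_n)$ (monotonicity in the first argument) and $F(y_n,y_n) \le F(y_n,x_n)$ (antitonicity in the second argument), so $x_{n+1}=F(x_n,y_n) \le F(y_n,x_n)=y_{n+1}$. This comparability is precisely what allows the pair to be inserted into the contractive condition (\ref{contraction}): taking $(x,y)=(y_n,x_n)$ and $(u,v)=(x_n,y_n)$, which satisfy $y_n \ge x_n$ and $x_n \le y_n$, and using $d(y_n,x_n)+d(x_n,y_n)=2\delta_n$, I obtain
\begin{equation*}
\delta_{n+1} = d\big(F(y_n,x_n),F(x_n,y_n)\big) \le \alpha\,M\big((y_n,x_n),(x_n,y_n)\big) + \beta\,\delta_n .
\end{equation*}

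Next I would bound $M_n := M\big((y_n,x_n),(x_n,y_n)\big)$. Being a minimum, $M_n$ is at most its first entry, which after substitution equals
\begin{equation*}
d(y_n,y_{n+1})\,\frac{2 + d(x_n,x_{n+1}) + d(y_n,y_{n+1})}{2 + 2\delta_n}.
\end{equation*}
The proof of Theorem~\ref{T1} already furnishes $d(x_n,x_{n+1}) \to 0$ and $d(y_n,y_{n+1}) \to 0$ via (\ref{b1})--(\ref{b2}), so the numerator stays bounded while the denominator is at least $2$; hence the leading factor $d(y_n,y_{n+1})$ forces $M_n \to 0$. Since the convergent sequences $\{x_n\},\{y_n\}$ are bounded, $\delta_n$ is bounded and $L := \limsup_{n\to+\infty}\delta_n$ is finite. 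Passing to the limit superior in the recursion above yields $L \le \alpha\cdot 0 + \beta L$, so $(1-\beta)L \le 0$; as $0<\beta<1$ this forces $L=0$, whence $\delta_n \to 0$ and $x=y$.

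The step I expect to be the main obstacle is the treatment of the rational quantity $M_n$: one cannot directly absorb $\alpha M_n$ into a geometric contraction of $\delta_n$, because $M_n$ is governed by the consecutive distances $d(x_n,x_{n+1})$ and $d(y_n,y_{n+1})$ rather than by $\delta_n$ itself. The way around this is to exploit that these consecutive distances already vanish (a by-product of the proof of Theorem~\ref{T1}) and then to argue with $\limsup$ rather than a termwise estimate. Finally, I would note that nothing above uses the continuity of $F$: only the Cauchy estimates (\ref{b1})--(\ref{b2}) and the convergence (\ref{fc}) enter, so the identical argument proves the statement under the hypotheses of Theorem~\ref{T2} as well.
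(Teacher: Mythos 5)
Your argument is correct and follows essentially the same route as the paper: the same induction establishing $x_n\leq y_n$, the same application of the contractive condition to the pair $(y_n,x_n)$, $(x_n,y_n)$, and the same bound on $M$ via its first entry with denominator at least $2$. The only (cosmetic) difference is the final limiting device --- you take a $\limsup$ of the recursion $\delta_{n+1}\leq \alpha M_n+\beta\delta_n$, whereas the paper inserts $d(x_n,y_n)\leq d(x_n,x)+d(x,y)+d(y,y_n)$ and absorbs the term $\beta\,d(x,y)$ to the left-hand side before letting $n\rightarrow+\infty$; both are valid and your closing observation that continuity of $F$ is never used is also consistent with the paper's statement covering Theorem \ref{T2}.
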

\begin{proof}
Suppose that $x_0\leq y_0$. We claim that
\begin{equation}\label{hG}
x_n\leq y_n,\,\,\forall\,n\in \N.
\end{equation}
From the mixed monotone property of $F$, we have:
$$
x_1=F(x_0,y_0)\leq F(y_0,y_0)\leq F(y_0,x_0)=y_1.
$$
Assume that $x_n\leq y_n$ for some $n$. Now, 
\begin{eqnarray*}
x_{n+1}&=&F^{n+1}(x_0,y_0)\\
&=&F(F^n(x_0,y_0),F^n(y_0,x_0))\\
&=&F(x_n,y_n)\\
&\leq & F(y_n,y_n)\\
&\leq & F(y_n,x_n)=y_{n+1}.
\end{eqnarray*}
Hence, (\ref{hG}) holds. 

Now, using (\ref{hG}) and the contraction condition, we get:
\begin{eqnarray*}
d(x,y)&\leq & d(x, x_{n+1})+d(x_{n+1},y_{n+1})+d(y_{n+1},y)\\
&=& d(x,x_{n+1})+d(F(F^n(x_0,y_0),F^n(y_0,x_0)),F(F^n(y_0,x_0),F^n(x_0,y_0)))+d(y_{n+1},y)\\
&=&d(x,x_{n+1})+d(F(y_n,x_n),F(x_n,y_n))+d(y_{n+1},y)\\
&\leq & d(x,x_{n+1})+\alpha M((y_n,x_n),(x_n,y_n))+\beta d(x_n,y_n)+d(y_{n+1},y)\\
&\leq & d(x,x_{n+1})+\alpha d(y_n,y_{n+1})\,\frac{2+d(x_n,x_{n+1})+d(y_n,y_{n+1})}{2+2d(y_n,x_n)}\\&&+\beta d(x_n,y_n)+d(y_{n+1},y)\\
&\leq & d(x,x_{n+1})+\alpha d(y_n,y_{n+1})\,\frac{2+d(x_n,x_{n+1})+d(y_n,y_{n+1})}{2}\\&&+\beta [d(x_n,x)+d(y,y_n))]+d(y_{n+1},y)+\beta d(x,y).
\end{eqnarray*}
Hence,
\begin{eqnarray*}
(1-\beta)d(x,y)&\leq & d(x,x_{n+1})+\alpha d(y_n,y_{n+1})\,\frac{2+d(x_n,x_{n+1})+d(y_n,y_{n+1})}{2}\\&&+\beta [d(x_n,x)+d(y,y_n))]+d(y_{n+1},y)\\
&\rightarrow & 0 \mbox{ as } n\rightarrow +\infty.
\end{eqnarray*}
Since $0<\beta<1$, this implies that $d(x,y)=0$, i.e., $x=y$.
This makes end to the proof.
\end{proof}

\subsection{Remark} 
If we put:
$$
f(x)=F(x,x),\,\,\forall\,x\in X,
$$
for $x=y=\hat{x}$ and $u=v=\hat{y}$, the contraction (\ref{contraction}) implies:
$$
d(f(\hat{x}),f(\hat{y}))\leq \alpha d(\hat{y},f(\hat{y}))\,\frac{1+d(\hat{x},f(\hat{x}))}{1+d(\hat{x},\hat{y})}+\beta d(\hat{x},\hat{y}),
$$
and we retrieve the contraction of Dass and Gupta \cite{DG}.

\end{document}